\newtheorem{theorem}{Theorem}[section]
\newtheorem{lemma}[theorem]{Lemma}
\theoremstyle{definition}
\newtheorem{definition}[theorem]{Definition}
\theoremstyle{remark}
\newcommand{\fA}{{\mathfrak A}}
\newcommand{\fF}{{\mathfrak F}}
\newcommand{\fK}{{\mathfrak K}}
\newcommand{\fN}{{\mathfrak N}} 
\newcommand{\fS}{{\mathfrak S}}
\newcommand{\fU}{{\mathfrak U}}
\newcommand{\cser}{\mathcal{C}}
\newcommand{\ideq}{\unlhd}
\newcommand{\Lie}{{}^{\text{Lie}} \mspace{-1.5mu}{}}
\DeclareMathOperator{\Leib}{Leib}
\DeclareMathOperator{\loc}{Loc}
\title[Locally defined formations]{On locally defined formations of soluble Lie and Leibniz algebras}
\author{Donald W. Barnes}
\address{1 Little Wonga Rd.\\Cremorne NSW 2090\\Australia\\}
\email{donwb@iprimus.com.au}
\subjclass[2010]{Primary 17B30, Secondary  20D10}
\keywords{Lie algebras, Leibniz algebras, saturated formations, local definition}
\begin{document}

\begin{abstract}   It is well-known that all saturated formations of finite soluble groups are locally defined and, except for the trivial formation,  have many different local definitions.  I show that for Lie and Leibniz algebras over a field of characteristic $0$, the formations of all nilpotent algebras and  of all soluble algebras are the only locally defined formations and the latter has many local definitions.   Over a field of non-zero characteristic, a saturated formation of soluble Lie algebras has at most one local definition but a locally defined saturated formation of soluble Leibniz algebras other than that of nilpotent algebras has more than one local definition.
\end{abstract}

\maketitle

\section{Introduction}
In the theory of finite soluble groups, a formation function is a function $f$ which assigns to each prime $p$, a formation $f(p)$ of finite soluble groups. The formation locally defined by $f$ is the class $\loc(f)$ of all finite soluble groups such that, for each chief factor $H/K$ of $G$ with $p$ dividing $|H/K|$,  we have $G/\cser_G(H/K) \in f(p)$.  (Here $\cser_G(H/K)$ denotes the centraliser in $G$ of $H/K$.)  It is a saturated formation.  Apart from the trivial formation which contains only the group of order $1$, every saturated formation of finite soluble groups has many such local definitions.  This theory is set out in Doerk and Hawkes \cite{DH}.  

If $\fK$ is a formation of soluble Lie algebras over the field $F$, the formation locally defined by $\fK$ is the class of Lie algebras $\loc(\fK)$ of those Lie algebras such that, for each chief factor $H/K$ of $L$, we have $L/ \cser_L(H/K) \in \fK$.  It is a saturated formation.  In contrast to the group case, not every saturated formation of soluble Lie algebras is locally defined. The theory of saturated formations of soluble Lie algebras is set out in  Barnes and Gastineau-Hills in \cite{BGH}. The analogous theory for Leibniz algebras is set out in Barnes \cite{SchunckLeib}.

For finite soluble groups, the formation function $f$ is called {\em integrated} if $f(p) \subseteq \loc(f)$ for all primes $p$.  It is called {\em full} if for all $p$, every extension of a $p$-group by a group in $f(p)$ is in $f(p)$.  For Lie and Leibniz algebras, trivially, $\fK \subseteq \loc(\fK)$, thus  the analogue of ``integrated'' always holds.  The significance of the condition ``full'' is that if $N$ is a normal $p$-subgroup of a group $G$ and $V$ is an irreducible $G$-module over the field of $p$ elements, then $N$ acts trivially on $V$.  There is no condition on an ideal $N$ of a Lie or Leibniz algebra $L$ that ensures that $N$ must act trivially on any irreducible $L$-module.  Thus there is no Lie or Leibniz analogue of the condition ``full''.

\section{Lie algebras over a field of characteristic $0$}

The formation $\fN$ of nilpotent Lie algebras is locally defined by the formation $\{0\}$ containing only the zero algebra.  If $\fK$ is a non-zero formation, then it contains the formation $\fA$ of abelian algebras.  But if $H/K$ is a chief factor of any soluble Lie algebra $L$ over a field of characteristic $0$, then $L/\cser_L(H/K)$ is abelian.  Thus for any non-zero formation $\fK$, we have $\loc(\fK) = \fS$, the formation of all soluble Lie algebras.

For Lie algebras in contrast to the case for groups, it is convenient to require that formations be non-empty.  If we allow formations to be empty, then the zero formation $\{0\} = \loc(\emptyset)$ and this is its only local definition.

We thus have, for a field $F$ of characteristic $0$, $\fN$ and $\fS$ are the only locally defined formations, $\fN$ has the unique local definition $\fN= \loc(\{0\})$  while $\fS = \loc(\fK)$ for any non-zero formation $\fK$.  If $F$ is algebraically closed, then $\{0\}, \fN$ and $\fS$ are the only saturated formations, but if $F$ is not algebraically closed, then other saturated formations exist. (See Barnes \cite{SatF0}.)

\section{Lie algebras over a field of characteristic $p \ne 0$}
Let $F$ be a field of characteristic $p \ne 0$.  Then not all saturated formations of soluble Lie algebras over $F$ are locally definable, for example, the formation $\fU$ of supersoluble Lie algebras is saturated but not locally definable.  I show that if the saturated formation $\fF$ is locally defined, then the formation $\fK$ with $\loc(\fK) = \fF$ is uniquely determined by $\fF$.

Denote the nil radical of $L$ by $N(L)$.  The saturated formation locally defined by $\fK$ consists of those algebras $L$ such that $L/N(L) \in \fK$, algebras which are the extension of a nilpotent algebra $N= N(L)$ by an algebra in $\fK$.  In a widely used terminology, they are
nilpotent by $\fK$.  It makes sense to denote the locally defined formation by $\fN \fF$. I reverse this construction.

\begin{definition}\label{def-div} Let $\fF$ be a saturated formation.  We  define
the quotient of $\fF$ by $\fN$ to be
$$\fF/\fN = \{L/A \mid L \in \fF, A \ideq L, N(L) \subseteq A\}.$$ \end{definition}

\begin{lemma} \label{lem-quot} Let $\fF$ be a saturated formation of soluble Lie algebras over any field.  Then
$\fF/\fN$ is a formation. \end{lemma}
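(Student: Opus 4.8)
The plan is to verify the two defining closure properties of a formation: closure under homomorphic images, and closure under finite subdirect products, the latter in the usual form that if $A, B \ideq L$ with $L/A, L/B \in \fF/\fN$, then $L/(A \cap B) \in \fF/\fN$. Throughout I expect to use only that $\fF$ is a formation, not that it is saturated.

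Closure under images is immediate. If $X \in \fF/\fN$, write $X = L/A$ with $L \in \fF$, $A \ideq L$ and $N(L) \subseteq A$. Every image of $X$ has the form $L/B$ with $B \ideq L$ and $A \subseteq B$; since then $N(L) \subseteq A \subseteq B$, the definition gives $L/B \in \fF/\fN$ at once.

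For subdirect products, passing to the quotient $L/(A\cap B)$ reduces the problem to the following. Given a Lie algebra $M$ with ideals $B_1, B_2$ such that $B_1 \cap B_2 = 0$ and $M/B_i \cong L_i/A_i$ for some $L_i \in \fF$, $A_i \ideq L_i$ and $N(L_i) \subseteq A_i$ (for $i=1,2$), I must produce $L \in \fF$, $A \ideq L$ with $N(L) \subseteq A$ and $L/A \cong M$. First I would form $K = L_1 \oplus L_2$; since $\fF$ is a formation it is closed under finite direct sums, so $K \in \fF$. Identifying $(L_1/A_1)\oplus(L_2/A_2)$ with $(M/B_1)\oplus(M/B_2)$, inside which $M$ sits as a subdirect product via $m \mapsto (m+B_1, m+B_2)$, I would take $L$ to be the preimage of this copy of $M$ under the natural map $K \to (L_1/A_1)\oplus(L_2/A_2)$, together with $A = A_1 \oplus A_2$. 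A direct check shows that $A$ is the kernel of the induced surjection $L \to M$, so $L/A \cong M$.

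The crux is to show $L \in \fF$ and $N(L) \subseteq A$, and here lies the main obstacle: $L$ is only a subalgebra, in fact an ideal, of $K \in \fF$, and formations are not closed under subalgebras. The way around is to observe that the two projections $L \to L_i$ are surjective — this uses that $M$ is subdirect in $(M/B_1)\oplus(M/B_2)$ — so that $L$ is a subdirect product of $L_1$ and $L_2$; writing $N_i$ for the kernel of $L \to L_i$ we have $L/N_i \cong L_i \in \fF$ and $N_1 \cap N_2 = 0$, whence $L \in \fF$ by closure under subdirect products. For the nil radical I would use that $N(L)$, being characteristic in $L$ and $L$ an ideal of $K$, is itself a nilpotent ideal of $K$, so that $N(L) \subseteq N(K)$; since $N(K) = N(L_1)\oplus N(L_2) \subseteq A_1 \oplus A_2 = A$, this yields $N(L) \subseteq A$ and completes the verification. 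I expect the only delicate points to be the surjectivity of the projections $L \to L_i$ and the fact that a characteristic ideal of an ideal is an ideal; both hold over an arbitrary field, so the hypothesis on $F$ plays no role.
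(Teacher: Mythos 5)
Your argument breaks at the crucial step, the verification that $N(L) \subseteq A$. You justify it by claiming that $L$ --- the preimage in $K = L_1 \oplus L_2$ of the subdirect copy of $M$ --- is an ideal of $K$. It is not, in general. Since $A = A_1 \oplus A_2$ is an ideal of $K$, we have $L \ideq K$ if and only if the subdirect copy $\{(m+B_1,\, m+B_2) \mid m \in M\}$ is an ideal of $(M/B_1) \oplus (M/B_2)$, and multiplying such an element by $(u+B_1,\, 0)$ shows this forces $[u,m] \in B_1 + B_2$ for all $u, m \in M$, i.e.\ $[M,M] \subseteq B_1 + B_2$. Already for $B_1 = B_2 = 0$ and $M$ non-abelian, the copy of $M$ is the diagonal of $M \oplus M$, which is an ideal only if $M$ is abelian. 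So the chain ``$N(L)$ is characteristic in $L$, $L \ideq K$, hence $N(L)$ is a nilpotent ideal of $K$ and $N(L) \subseteq N(K)$'' collapses at its second link. (There is a second weak point: the transfer principle you invoke applies to ideals invariant under all derivations, and derivation-invariance of the nil radical is a characteristic-zero theorem; it cannot be taken for granted over an arbitrary field, whereas the lemma is asserted for any field. So even if $L$ were an ideal of $K$, this step would need a different justification.)

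The inclusion you want is nonetheless true, and it follows from the surjectivity of the projections $p_i : L \to L_i$, which you have already established for the $L \in \fF$ step: $p_i(N(L))$ is the image of an ideal under an epimorphism, hence a nilpotent ideal of $L_i$, so $p_i(N(L)) \subseteq N(L_i) \subseteq A_i$, giving $N(L) \subseteq L \cap (A_1 \oplus A_2) = A$. With that substitution your proof is complete, and it is genuinely different from the paper's: Barnes first reduces, via a pullback over $L/(A_1 + A_2)$, to the special case $B_i = N(X_i)$, and then uses saturation of $\fF$ in an essential way --- he forms the split extension $E$ of the direct sum $V$ of the chief factors of the $X_i$ (these being $\fF$-central $L$-modules) by $L$, concluding $E \in \fF$ by saturation, with $N(E) = V$ and $E/V \simeq L$. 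Your repaired route never invokes saturation, so it in fact establishes the lemma for an arbitrary formation $\fF$. Your closing remark that the hypothesis on $F$ plays no role is correct, but for the wrong reason: as written, the characteristic-ideal step is precisely where the ground field could have intruded.
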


\begin{proof} Put $\fK = \fF / \fN$.  By its definition, $\fK$ is quotient closed.  Suppose that $A_i \ideq L$ and that $L/A_i \in \fK$, $i= 1,2$.  We have
to prove that $L/(A_1 \cap A_2) \in \fF$.  We may suppose without loss of
generality that $A_1 \cap A_2 = 0$.

There exist $X_i \in \fF$ with $N(X_i) \subseteq  B_i \ideq X_i$ and $X_i/B_i \simeq L/A_i$.  Then $L/A_i \in \fF$, so $L \in \fF$.  

We have epimorphisms $\phi_i : X_i/B_i \to L/(A_1+A_2)$ and can identify $L$
with $\{(y_1, y_2) \in X_1/B_1 \oplus X_2/B_2 \mid \phi_1(y_1) = \phi_2(y_2)
\}$.  Let $\psi_i$ be the composite of the natural homomorphism $X_i/N(X_i) \to
X_i/B_i$ with $\phi_i$ and let 
$$X = \{(x_1, x_2) \in X_1/N(X_1) \oplus X_2/N(X_2) \mid \psi_1(x_1) =
\psi_2(x_2) \}.$$
Then the natural homomorphism $X_1/N(X_1) \oplus X_2/N(X_2) \to X_1/B_1 \oplus X_2/B_2$ maps the subalgebra  $X$ onto $L$.  As $\fK$ is quotient closed, it
is sufficient to prove that $X \in \fK$.  But this is the special case $B_i = N(X_i)$.

The chief factors of $X_i$ are $\fF$-central irreducible $L$-modules on which $B_1$ and $B_2$ act trivially.  Let $V$ be the direct sum of the chief factors of $X_1$ and $X_2$.  Then the split extension $E$ of $V$ by $L$ is in $\fF$.  Since
the intersection of the centralisers in $X_i$ of the chief factors of $X_i$ is $B_i$, the intersection of the centralisers in $E$ of the chief factors of $E$ is $V$ and $N(E) = V$.  Thus $L \in \fK$.
\end{proof}

The above lemma holds over any field, but the following theorem needs the assumption of non-zero characteristic.

\begin{theorem}  Let $\fF= \loc(\fK)$ be a locally defined formation of soluble Lie algebras over a field $F$ of non-zero characteristic.  Then $\fK = \fF/\fN$.
\end{theorem}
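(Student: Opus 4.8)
The plan is to first replace the definition of $\loc(\fK)$ by the description recorded just before Definition~\ref{def-div}, namely $\fF = \{L \mid L/N(L) \in \fK\}$. This rests on the fact that for a soluble algebra the nil radical is the intersection $N(L) = \bigcap_{H/K} \cser_L(H/K)$ taken over the chief factors $H/K$ of $L$: since $\fK$ is a formation, and hence closed under subdirect products and quotients, $L/N(L)$ lies in $\fK$ exactly when every $L/\cser_L(H/K)$ does. Granting this, the inclusion $\fF/\fN \subseteq \fK$ is immediate, for if $L \in \fF$ and $N(L) \subseteq A \ideq L$ then $L/A$ is a quotient of $L/N(L) \in \fK$ and $\fK$ is quotient closed.

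For the reverse inclusion I would fix $K \in \fK$ and try to realise it as $L/N(L)$ for a suitable soluble $L$; taking $A = N(L)$ then places $K$ in $\fF/\fN$, since $L/N(L) \cong K \in \fK$ forces $L \in \fF$. The natural candidate, reusing the construction in the proof of Lemma~\ref{lem-quot}, is a split extension $L$ of a $K$-module $V$ by $K$, so that $L/V \cong K$ and $V$ is an abelian ideal with $V \subseteq N(L)$. Every chief factor of $L$ lies either inside $V$ or in $L/V \cong K$, and in each case $V$ is contained in its centraliser; computing through the centraliser description of the nil radical one finds $N(L) = V \oplus (C_V \cap N(K))$, where $C_V$ is the intersection of the centralisers in $K$ of the composition factors of $V$ and $N(K) = \bigcap_T \cser_K(T)$ over the chief factors $T$ of $K$. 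Thus $N(L) = V$, and hence $L/N(L) \cong K$, precisely when $C_V \cap N(K) = 0$; that is, when no nonzero element of $N(K)$ acts trivially on every composition factor of $V$.

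The heart of the matter, and the step where the hypothesis of characteristic $p \ne 0$ is indispensable, is therefore to produce a $K$-module $V$ with $C_V \cap N(K) = 0$, equivalently to show that the irreducible representations of $K$ jointly separate the nonzero elements of $N(K)$. In characteristic $0$ this fails entirely: every element of the nil radical acts trivially on each chief factor, so $C_V \cap N(K) = N(K)$ for all $V$ and the construction only ever recovers $K/N(K)$, which is exactly why $\loc$ is so far from injective over such fields. In characteristic $p$ I would instead build the required modules from reduced enveloping algebras: for each $0 \ne n \in N(K)$ choose a $p$-character $\chi$ with $\chi(n) \ne 0$, whereupon $n$ acts invertibly, and in particular non-nilpotently and nontrivially, on every module carrying that character, so some irreducible constituent fails to be centralised by $n$. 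Taking $V$ to be the direct sum of finitely many such irreducibles, chosen so that their centralisers meet $N(K)$ in $0$, yields $N(L) = V$ and completes the argument. I expect this separation property of the nil radical to be the main obstacle, and it is likely cleanest to deduce it from the structural results for soluble Lie algebras in \cite{BGH}.
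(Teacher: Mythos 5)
Your overall frame agrees with the paper's: rewrite $\fF = \loc(\fK)$ as $\{L \mid L/N(L) \in \fK\}$ via $N(L) = \bigcap \cser_L(H/K)$, get $\fF/\fN \subseteq \fK$ from quotient closure, and for the converse realise algebras in $\fK$ as $X/N(X)$ with $X$ a split extension in $\fF$ of a module $V$ by the given algebra, so that $N(X)=V$. The paper does this with one extension $E_i$ for each irreducible summand $V_i$ and then uses Lemma~\ref{lem-quot} (closure of $\fF/\fN$ under subdirect sums); your single-extension variant, with the criterion $N(L) = V + (C_V \cap N(K))$ and hence $N(L) = V$ iff $C_V \cap N(K) = 0$, is a legitimate and even slightly more economical packaging. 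So everything reduces, as you say, to producing a module whose composition factors separate the nonzero elements of $N(K)$.

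That separation step is exactly where your argument breaks, and it is the only point where the characteristic enters. The paper disposes of it by citing Jacobson \cite[Theorem VI.2]{Jac}: in characteristic $p \ne 0$ every finite-dimensional Lie algebra has a \emph{faithful} completely reducible module $V$, whence $C_V = \bigcap_i K_i = 0$ and your construction closes at once. Your substitute via reduced enveloping algebras fails twice over. First, $p$-characters presuppose a restricted structure, and a soluble Lie algebra over $F$ need not be restrictable (there are nilpotent algebras on which some $(\mathrm{ad}\,x)^p$ is not inner); one could pass to a $p$-envelope, but you do not address this. Second, and more seriously, even for restricted algebras the claim that $\chi(n) \ne 0$ forces $n$ to act invertibly (or even nontrivially) on every module of $p$-character $\chi$ is false. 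Take $K$ abelian with basis $n,m$ and $p$-map $n^{[p]} = m$, $m^{[p]} = 0$, and take $\chi(n) = 1$, $\chi(m) = -1$ with $p$ odd. A module of $p$-character $\chi$ satisfies $\rho(x)^p - \rho(x^{[p]}) = \chi(x)^p\,\mathrm{id}$; over an algebraically closed field every irreducible such module is one-dimensional, the relation $\rho(m)^p = -1$ forces $\rho(m) = -1$ (since $(\rho(m)+1)^p = \rho(m)^p + 1 = 0$), and then $\rho(n)^p = \rho(m) + 1 = 0$ forces $\rho(n) = 0$: so $n$ annihilates \emph{every} irreducible of this character despite $\chi(n) = 1$. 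The defect can be cured by choosing $\chi$ so that the scalar $\sum_{i<k} \chi(n^{[p]^i})^{p^{k-i}}$ is nonzero, but at that point you are reconstructing the proof of Jacobson's theorem; the clean fix is simply to quote it, as the paper does. (A side remark: your characteristic-$0$ aside that $C_V \cap N(K) = N(K)$ for all $V$ is also too strong --- elements of $N(K)$ can act as nonzero scalars on irreducible modules; the genuine obstruction there is that irreducibles of soluble algebras are one-dimensional over the algebraic closure, so $[K,K]$ always lies in $C_V$.)
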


\begin{proof}  Clearly, $\fF/ \fN \subseteq \fK$.  Suppose $L \in \fK$.   By Jacobson \cite[Theorem VI.2, p. 205]{Jac}, there exists a faithful, completely reducible $L$-module $V$.  Let $V = \oplus V_i$ where the $V_i$ are irreducible.   Let $K_i$ be the kernel of the representation of $L$ on $V_i$.  Since $L/K_i \in \fK$, the split extension $E_i$ of $V_i$ by $L/K)i$ is in $\fF$.  The representation of $L/K_i$ on $V_i$ is faithful, so $N(E_i) = V_i$, and $L/K_i \in \fF/\fN$.  But $\cap_i(K_i) = 0$ and it follows that $L \in \fF/\fN$.
\end{proof}

\section{Leibniz algebras}
For Leibniz algebras, the situation is a little more complicated than for Lie algebras.  A left Leibniz algebra is a linear algebra $L$ whose left multiplication operators $d_a : L \to L$ given by $d_a(x) = ax$ for $a,x \in L,$ are derivations.  The basic theory of Leibniz algebras is set out in Loday and Pirashvili \cite{LP}.  The theory of saturated formations of soluble Leibniz algebras is given in Barnes \cite{SchunckLeib}.

The subspace $\Leib(L) = \langle x^2 \mid x \in L \rangle$  is an abelian ideal of $L$ and $L/\Leib(L)$ is a Lie algebra.  Further, $\Leib(L) L =0$.  If $V$ is an irreducible $L$-module, then by a theorem of Loday and Pirashvili \cite{LP}, $L/\cser_L(V)$ is a Lie algebra and $V$ is either symmetric ($vx = -xv$ for $x \in L$ and $v \in V$) or antisymmetric ($VL=0$).  It follows that if $\fF = \loc(\fK)$ is a locally defined formation of soluble Leibniz algebras, then $\fF = \loc(\Lie\fK)$ where $\Lie\fK$ is the class of all Lie algebras in $\fK$.

The characteristic $0$ case is very like that of Lie algebras.  The saturated formation $\fN$ of nilpotent Leibniz algebras is locally defined by the zero formation.  If $\fK$ is a non-zero formation, then it contains the abelian algebras and $\loc(\fK) = \fS$, the class of all soluble Leibniz algebras.  As for Lie algebras, $\fN$ and $\fS$ are the only locally defined formations.

The case of characteristic $p \ne 0$ is closely tied to the corresponding case for Lie algebras.  By Barnes \cite[Corollary 3.17]{SchunckLeib}, if $\fF$ is a saturated formation of soluble Leibniz algebras, then $\Lie\fF$ is a saturated formation of soluble Lie algebras and we have a one to one correspondence $\fF \leftrightarrow \Lie\fF$.  If $\fF = \loc(\fK)$, then both $\fF$ and $\Lie\fF$ are locally defined in the corresponding categories by $\Lie\fK$.  We thus have the following theorem.

\begin{theorem}  Let $\fF$ be a locally defined formation of soluble Leibniz algebras over a field of charactistic $p \ne 0$.  Then there exists a unique formation $\fK_0$ of soluble Lie algebras with $\fF = \loc(\fK_0)$.
\end{theorem}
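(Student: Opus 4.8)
The plan is to transport the whole question into the category of soluble Lie algebras, where the theorem of the preceding section already settles it, and then to pull the conclusion back along the correspondence $\fF \leftrightarrow \Lie\fF$.

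First I would extract existence from the material already assembled. Since $\fF$ is locally defined, $\fF = \loc(\fK)$ for some formation $\fK$, and by the Loday--Pirashvili analysis recalled above we may replace $\fK$ by the class $\Lie\fK$ of its Lie members without changing the locally defined formation, so $\fF = \loc(\Lie\fK)$. Setting $\fK_0 = \Lie\fK$, this is a formation of soluble Lie algebras --- indeed the Lie-algebra theorem of Section 3 will identify it as $(\Lie\fF)/\fN$, which is a formation by Lemma~\ref{lem-quot} --- and $\fF = \loc(\fK_0)$ gives the required existence.

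For the identification and uniqueness I would invoke the bijection $\fF \mapsto \Lie\fF$ of Barnes \cite[Corollary 3.17]{SchunckLeib}, together with the observation recorded above that a local definition passes through the $\Lie$ functor: from $\fF = \loc(\fK_0)$ in the Leibniz category one obtains $\Lie\fF = \loc(\fK_0)$ in the Lie category, where $\Lie\fK_0 = \fK_0$ because $\fK_0$ consists of Lie algebras. Now the theorem for Lie algebras in characteristic $p \ne 0$ applies to the locally defined Lie formation $\Lie\fF = \loc(\fK_0)$ and forces $\fK_0 = (\Lie\fF)/\fN$. If $\fK_1$ is any formation of soluble Lie algebras with $\fF = \loc(\fK_1)$, the identical transfer yields $\Lie\fF = \loc(\fK_1)$ and hence, by the same Lie theorem, $\fK_1 = (\Lie\fF)/\fN = \fK_0$. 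This proves both that $(\Lie\fF)/\fN$ is a local definition of $\fF$ by a Lie formation and that it is the only one.

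I expect the single delicate point to be the faithful transfer of local definability in both directions through the correspondence $\fF \leftrightarrow \Lie\fF$: one must know not merely that $\Lie$ carries saturated Leibniz formations bijectively to saturated Lie formations, but that $\loc$ in the two categories is compatible with it, so that a Lie formation locally defines $\fF$ precisely when it locally defines $\Lie\fF$. Once this compatibility is granted --- it is exactly what Corollary 3.17 and the preamble to the theorem supply --- the argument reduces to a direct citation of the characteristic-$p$ Lie theorem together with its uniqueness clause, with no further computation required.
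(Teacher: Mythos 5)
Your proposal is correct and takes essentially the same approach as the paper: the paper likewise proves this theorem by transporting the local definition through the bijection $\fF \leftrightarrow \Lie\fF$ of \cite[Corollary 3.17]{SchunckLeib}, using the observation that $\fF = \loc(\fK)$ makes $\Lie\fF = \loc(\Lie\fK)$ in the Lie category, and then invoking the characteristic-$p$ uniqueness theorem $\fK = \fF/\fN$ for Lie algebras. Your identification $\fK_0 = (\Lie\fF)/\fN$ is exactly what that theorem supplies, so no further argument is needed.
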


However, except for the case of $\fN = \loc(\{0\})$,  there are always other formations $\fK$ of soluble Leibniz algebras with $\loc(\fK) = \loc(\fK_0)$.  

\begin{lemma}  Let $\fK_0 \ne \{0\}$ be a formation of soluble Lie algebras.  Let $P_1, \dots, P_n$ be soluble Leibniz algebras such that $P_i/\Leib(P_i) \in \fK_0$ and let $\fK$ be the smallest formation of soluble Leibniz algebras containing $\fK_0$ and the algebras $P_i$.  Then $\Lie\fK = \fK_0$ and $\loc(\fK) = \loc(\fK_0)$.
\end{lemma}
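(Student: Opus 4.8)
The plan is to reduce both conclusions to the single statement $\Lie\fK = \fK_0$. The inclusion $\fK_0 \subseteq \Lie\fK$ is immediate, since $\fK_0 \subseteq \fK$ and every algebra in $\fK_0$ is a Lie algebra. Granting the reverse inclusion, the equality of local definitions is then free: by the relation $\loc(\fK) = \loc(\Lie\fK)$ recorded above for any formation $\fK$ of soluble Leibniz algebras, we would have $\loc(\fK) = \loc(\Lie\fK) = \loc(\fK_0)$. So everything rests on showing that a Lie algebra lying in $\fK$ must already lie in $\fK_0$.

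To this end I would introduce the class
$$\mathcal{M} = \{L \mid L/\Leib(L) \in \fK_0\}$$
and show it is a formation containing the generators of $\fK$; since $\fK$ is the smallest such formation this gives $\fK \subseteq \mathcal{M}$, and then any Lie algebra $L \in \fK$ satisfies $L = L/\Leib(L) \in \fK_0$. That $\mathcal{M}$ contains $\fK_0$ is clear, as $\Leib(A) = 0$ when $A$ is a Lie algebra, and it contains each $P_i$ by hypothesis. Quotient closure is routine: for $I \ideq L$ one has $\Leib(L/I) = (\Leib(L)+I)/I$, so $(L/I)/\Leib(L/I) \cong L/(\Leib(L)+I)$ is a quotient of $L/\Leib(L) \in \fK_0$ and hence lies in $\fK_0$.

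The step I expect to be the real obstacle is closure of $\mathcal{M}$ under subdirect products. Suppose $A_1, A_2 \ideq L$ with $A_1 \cap A_2 = 0$ and $L/A_i \in \mathcal{M}$, and put $\bar L = L/\Leib(L)$, with $\bar A_i$ the image of $A_i$. Quotient closure gives $\bar L/\bar A_i \in \fK_0$, whence $\bar L/(\bar A_1 \cap \bar A_2) \in \fK_0$ because $\fK_0$ is a formation. The trouble is that passing to $\bar L$ can fuse the two ideals: even though $A_1 \cap A_2 = 0$, the intersection $\bar A_1 \cap \bar A_2$ is the image of $A_1 \cap (A_2 + \Leib(L))$ and may be nonzero. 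Here I would use $\Leib(L)\,L = 0$: for $x \in A_1 \cap (A_2 + \Leib(L))$, writing $x = a_2 + w$ with $a_2 \in A_2$ and $w \in \Leib(L)$ gives $xy = a_2 y$ (since $wy = 0$), an element of $A_1 \cap A_2 = 0$, for all $y \in L$; hence $\bar x$ is central in $\bar L$. Thus $\bar L$ is a central extension of $\bar L/(\bar A_1 \cap \bar A_2) \in \fK_0$, and the argument closes provided $\fK_0$ absorbs such central extensions, giving $\bar L \in \fK_0$.

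This last point is the crux, and it is where one needs more of $\fK_0$ than being a formation: a formation need not absorb central extensions. Indeed, a subdirect product of two copies of a suitable nilpotent Leibniz algebra can have the Heisenberg algebra as its associated Lie algebra, so for $\fK_0$ merely the formation of abelian algebras $\Lie\fK$ would be strictly larger than $\fK_0$. In the situation of the paper $\fK_0$ is the local definition of a saturated formation and contains the abelian algebras, and I would verify that such a $\fK_0$ is closed under the central extensions produced above. With that property in hand the class $\mathcal{M}$ is a formation, $\fK \subseteq \mathcal{M}$, and the lemma follows, yielding both $\Lie\fK = \fK_0$ and $\loc(\fK) = \loc(\fK_0)$.
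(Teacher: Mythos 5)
You have correctly reduced everything to showing $\Lie\fK = \fK_0$ (via the relation $\loc(\fK) = \loc(\Lie\fK)$), and your analysis of the class $\mathcal{M} = \{L \mid L/\Leib(L) \in \fK_0\}$ is sound as far as it goes: quotient closure via $\Leib(L/I) = (\Leib(L)+I)/I$ is right, and so is your identification of the obstacle to closure under subdirect products, including the computation (using $\Leib(L)L = 0$ and the fact that $L/\Leib(L)$ is a Lie algebra) that $\bar A_1 \cap \bar A_2$ is central in $\bar L$. This is essentially the paper's own route: its proof forms the maps $\psi_i \colon L/\Leib(L) \to (L/N_i)/\Leib(L/N_i)$ and asserts that these express $L/\Leib(L)$ as a \emph{subdirect} sum, i.e.\ that $\bigcap_i (N_i + \Leib(L)) = \Leib(L)$ --- which is exactly the fusing phenomenon you flagged and does not follow from $\bigcap_i N_i = 0$. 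Where your proposal fails as a proof is its final step: you defer to an unproved claim that a $\fK_0$ which is a local definition of a saturated formation containing the abelian algebras is closed under the resulting central extensions. That claim imports hypotheses absent from the lemma (which assumes only $\fK_0 \neq \{0\}$), and it is false even so: $\fA$ itself is a local definition (in nonzero characteristic, by the uniqueness theorem of Section 3, it is \emph{the} local definition of $\loc(\fA)$), contains the abelian algebras, and is not closed under central extensions --- the Heisenberg algebra, as you yourself observed one paragraph earlier. So your patch is unavailable precisely in the case you identified as problematic.

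The deeper point is that your obstruction is not an artifact of your method: it can be realized, and it refutes the lemma as stated, the paper's proof breaking at the subdirectness assertion above. Take $\fK_0 = \fA$ and let $L$ have basis $x, y, z, w$ with the only nonzero products $xy = z$ and $yx = -z + w$. All triple products vanish, so $L$ is a nilpotent left Leibniz algebra; since $(\alpha x + \beta y + \gamma z + \delta w)^2 = \alpha\beta w$, we get $\Leib(L) = \langle w \rangle$ and $L/\Leib(L)$ is the Heisenberg algebra. The subspaces $N_1 = \langle z \rangle$ and $N_2 = \langle z - w \rangle$ lie in the two-sided annihilator of $L$, hence are ideals, and $N_1 \cap N_2 = 0$, while $P_i = L/N_i$ (two copies of one $3$-dimensional nilpotent non-Lie algebra) satisfy $P_i/\Leib(P_i) \in \fA$. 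For $\fK$ the smallest formation containing $\fA$ and $P_1, P_2$, closure under subdirect products gives $L \in \fK$, and quotient closure then puts the Heisenberg algebra in $\Lie\fK \setminus \fK_0$, so $\Lie\fK \neq \fK_0$. In characteristic $p$ the second conclusion fails as well: the Heisenberg algebra $H$ has a faithful irreducible module ($F[t]/(t^p)$ with $x \mapsto d/dt$ and $y$ acting as multiplication by $t$), and the split extension of this module by $H$ lies in $\loc(\fK)$ but not in $\loc(\fA)$. So no argument along your lines --- or the paper's --- can succeed without strengthening the hypotheses, for instance by requiring $\fK_0$ to be closed under central extensions (as $\fN$ and $\fS$ are, but $\fA$ is not); your proposal's real contribution is to have located this error, not to have repaired it.
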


\begin{proof}  We have to show that, if $L \in \fK$, then $L/\Leib(L) \in \fK_0$.   Since $L \in \fK$, there exist ideals $N_1, \dots N_r$ of $L$ with $\cap_i N_i = 0$ such that, for each $i$, either $L/N_i \in \fK_0$ or $L/N_i \simeq P_j$ for some $j$.   We have the natural epimorphisms $\phi_i: L \to L/N_i$ and $L$ may be regarded as the subalgebra of $\oplus_i L/N_i$ whose elements are the $(\phi_1(x), \dots, \phi_r(x))$ for $x \in L$.  But $\phi_i$ maps $x^2$ to $\phi_i(x)^2$, thus maps $\Leib(L)$ onto $\Leib(L/N_i)$ and so gives an epimorphism $\psi_i:L/\Leib(L) \to (L/N_i)/\Leib(L/N_i)$.  This expresses $L/\Leib(L)$ as a subdirect sum of the $(L/N_i)/\Leib(L/N_i)$.  But for all $i$,  $(L/N_i)/\Leib(L/N_i) \in \fK_0$.  As $\psi_i(L/\Leib(L)) \in \fK_0$ for all $i$,  $L/\Leib(L) \in \fK_0$ as asserted.
\end{proof}

We can always construct Leibniz algebras $P$ with $P/\Leib(P) \in \fK_0$.  Let $L$ be any Lie algebra in $\fK_0$ and let $V$ be any non-trivial left $L$-module.  We make this into a Leibniz module by defining $VL = 0$.  Let $P$ be the split extension of $V$ by $L$.  Then $P$ is not a Lie algebra, so not in $\fK_0$.  We have $\Leib(P)=V$ and so $P/\Leib(P) \simeq L \in \fK_0$.  Thus we can always produce $\fK \ne \fK_0$ with $\loc(\fK) = \loc(\fK_0)$.

\bibliographystyle{amsplain}

\end{document}